\newcommand{\C}{\mathbb{C}}
\newcommand{\Z}{\mathbb{Z}}
\newcommand{\QQ}{\mathbb{Q}}
\newcommand{\PP}{\mathbb{P}}
\newcommand{\LL}{\mathbb{L}}
\newcommand{\A}{\mathbb{A}}
\newcommand{\XX}{\mathcal X}
\newcommand{\MM}{\mathcal M}
\newcommand{\gr}{\hbox{Gr}}
\newcommand{\ima}{\hbox{Im}}
\newcommand{\rom}{\romannumeral}
\newcommand{\ide}{\hbox{id}}
\newtheorem{theorem}{Theorem}[section]
\newtheorem{claim}[theorem]{Claim}
\newtheorem{lemma}[theorem]{Lemma}
\newtheorem{sublemma}[theorem]{Sublemma}
\newtheorem{corollary}[theorem]{Corollary}
\newtheorem{nonumbering}{Theorem}
\newtheorem{convention}{Conventions}
\theoremstyle{definition}
\newtheorem{remark}[theorem]{Remark}
\newtheorem{definition}[theorem]{Definition}
\newtheorem{nonumberingt}{Acknowledgements}
\begin{document}
\author[Robert Laterveer]
{Robert Laterveer}

\address{Institut de Recherche Math\'ematique Avanc\'ee,
CNRS -- Universit\'e 
de Strasbourg,\
7 Rue Ren\'e Des\-car\-tes, 67084 Strasbourg CEDEX,
FRANCE.}
\email{robert.laterveer@math.unistra.fr}

\title[On the motive of Kapustka--Rampazzo's CY threefolds]{On the motive of Kapustka--Rampazzo's Calabi-Yau threefolds}

\begin{abstract} Kapustka and Rampazzo have exhibited pairs of Calabi-Yau threefolds $X$ and $Y$ that are L--equivalent and derived equivalent, without being birational.
We complete the picture by showing that $X$ and $Y$ have isomorphic Chow motives.
\end{abstract}

\keywords{Algebraic cycles, Chow groups, motives, Calabi--Yau varieties, derived equivalence}

\subjclass{Primary 14C15, 14C25, 14C30.}

\maketitle

\section{Introduction}

Let $\hbox{Var}(\C)$ denote the category of algebraic varieties over the field $\C$.
The Grothendieck ring $K_0(\hbox{Var}(\C))$ encodes fundamental properties of the birational geometry of varieties. The intricacy of the ring $K_0(\hbox{Var}(\C))$
is highlighted by the result of Borisov \cite{Bor}, showing that the class of the affine line $\LL$ is a zero--divisor in $K_0(\hbox{Var}(\C))$. Following on Borisov's pioneering result, a great many people have been hunting for Calabi--Yau varieties 
 $X, Y$ that are {\em not\/} birational (and so $[X]\not=[Y]$ in the Grothendieck ring), but
  \[          ([X] -[Y]) \LL^r=0\ \ \ \hbox{in}\ K_0(\hbox{Var}(\C)) \ ,\]
  i.e., $X$ and $Y$ are ``L--equivalent'' in the sense of \cite{KS}.
  In many cases, the captured varieties $X$ and $Y$ are also derived equivalent \cite{IMOU}, \cite{IMOU2}, \cite{Mar}, \cite{Kuz}, \cite{OR}, \cite{BCP}, \cite{KS}, \cite{HL}, \cite{Man}, \cite{KR}, \cite{KKM}.
  
   According to a conjecture made by Orlov \cite[Conjecture 1]{Or}, derived equivalent smooth projective varieties should have isomorphic Chow motives. This conjecture 
   is true for $K3$ surfaces \cite{Huy}, but is still
   open for Calabi--Yau varieties of dimension $\ge 3$. In \cite{L}, I verified Orlov's conjecture for the Calabi--Yau threefolds of Ito--Miura--Okawa--Ueda \cite{IMOU}.
  The aim of the present note is to check that Orlov's conjecture is also true for the threefolds constructed recently by Kapustka--Rampazzo:
  
  \begin{nonumbering}[=theorem \ref{main}] Let $X, Y$ be two derived equivalent Calabi--Yau threefolds as in \cite{KR}. Then there is an isomorphism of Chow motives
   \[ h(X)\cong h(Y)\ \ \ \hbox{in}\ \MM_{\rm rat}\ .\]
   \end{nonumbering}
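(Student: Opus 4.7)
The plan is to adapt to the Kapustka--Rampazzo setting the strategy developed in \cite{L} for the Ito--Miura--Okawa--Ueda pair, which exploits the fact that $X$ and $Y$ are explicit zero loci of sections of globally generated vector bundles on a Grassmannian, and that the pair is linked via a common ``roof'' geometry on which a motivic comparison can be made. On the cohomological side, the derived equivalence proved in \cite{KR} already forces the Hodge structures of $X$ and $Y$ to match; what has to be done is to upgrade this to an equivalence of Chow motives.

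The first step is to build an explicit correspondence $\Gamma\in A^3(X\times Y)$ from the geometry. Writing $X=V(s_X)$ and $Y=V(s_Y)$ as zero loci of sections of vector bundles $E_X,E_Y$ on a Grassmannian $G$, the Cayley trick expresses $h(X)$ and $h(Y)$, modulo Tate twists of $h(G)$, in terms of the motives of the associated projective-bundle hypersurfaces $H_X\subset\PP(E_X^\vee)$ and $H_Y\subset\PP(E_Y^\vee)$. These two hypersurfaces can then be linked through a common incidence variety $T$ (or an appropriate blow-up thereof) dictated by the roof of the Kapustka--Rampazzo construction. Chasing correspondences through the resulting diagram produces $\Gamma$ together with its transpose $\Gamma^t$, chosen so that $\Gamma\circ\Gamma^t$ and $\Gamma^t\circ\Gamma$ coincide cohomologically with the diagonals $\Delta_Y$ and $\Delta_X$.

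The second and main step is to upgrade these cohomological identities to equalities in $A^3(Y\times Y)$ and $A^3(X\times X)$. Following \cite{L}, I would spread the pair out over the moduli base $B$ parametrising Kapustka--Rampazzo pairs, form the relative product $\XX\times_B\mathcal{Y}$, and establish a Franchetta-type property: a cycle on the total space whose restriction to the general fibre is cohomologically trivial is already rationally trivial there. Because $\Gamma$ and $\Gamma^t$ are generically defined, the obstruction cycles $\Gamma\circ\Gamma^t-\Delta_Y$ and $\Gamma^t\circ\Gamma-\Delta_X$ fall into the scope of this property, and therefore vanish modulo rational equivalence.

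The step I expect to be the main obstacle is precisely the Franchetta-type analysis of $\XX\times_B\mathcal Y$. Unlike the IMOU case, the Kapustka--Rampazzo pairs arise from more intricate linear data on $G$, so one needs careful bookkeeping of the tautological subring of $A^\ast(X\times Y)$ coming from the ambient Grassmannian, and a verification that it surjects onto the piece of the Chow ring that can be hit by universally defined cycles. Once this bookkeeping is completed, the identities $\Gamma\circ\Gamma^t=\Delta_Y$ and $\Gamma^t\circ\Gamma=\Delta_X$ in $A^3$ yield the desired isomorphism $h(X)\cong h(Y)$ in $\MM_{\rm rat}$.
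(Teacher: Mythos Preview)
Your strategy is a coherent plan, but it is not the route taken in the paper, and its central step is left open.

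The paper does not spread out over a moduli base and does not invoke any Franchetta-type property. Instead it works directly with the single ``roof'' diagram from \cite{KR}: the flag variety $F\subset G(2,5)\times G(3,5)$, a hyperplane section $M\subset F$, and the loci $D=f^{-1}(X)$, $E=g^{-1}(Y)$ inside $M$. The maps $f_X\colon D\to X$ and $g_Y\colon E\to Y$ are $\PP^2$-fibrations, while $f$ and $g$ are $\PP^1$-fibrations over the complements. After splitting off the Tate summands, the paper proves $h^3(X)\cong h^7(M)(2)\cong h^3(Y)$ by writing down the explicit correspondences $\Gamma_1=\Gamma_i\circ{}^t\Gamma_{f_X}$ and $\Xi_1=\tfrac{1}{d\lambda}{}^t\Gamma_1$ and checking that they act as mutual inverses on $A^\ast_{hom}$. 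The nontrivial input is a result on \emph{higher} Chow groups of $\PP^m$-fibrations (a variant of a theorem of Vial), used through the localization long exact sequence to show that $i_\ast (f_X)^\ast\colon A^{hom}_{i-2}(X)\to A^{hom}_i(M)$ is surjective. Once that surjectivity is established, the inverse relations are a direct Chow-theoretic computation, and Manin's identity principle upgrades them to an isomorphism of motives. No moduli, no Franchetta, no Cayley trick.

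Your proposal, by contrast, places all the weight on a Franchetta-type statement for the family $\XX\times_B\mathcal{Y}$. You correctly flag this as the main obstacle, but that is precisely the gap: for Calabi--Yau threefolds the group $A^2_{hom}$ is typically enormous, and showing that generically defined classes in $A^3(X\times Y)$ are governed by tautological classes from the ambient Grassmannians is a substantial claim that you have not carried out, nor even sketched a mechanism for. Until that step is actually proven the argument is a strategy rather than a proof. The paper's route sidesteps this difficulty entirely by passing through the intermediate variety $M$, at the price of bringing in higher Chow groups; your route would in principle be more portable to pairs lacking such a clean roof, but as written it does not establish the theorem.
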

   
To prove theorem \ref{main}, we exploit the ``homological projective duality--style'' diagram given in \cite{KR} relating $X$ and $Y$. One key ingredient in the proof that might be of independent interest is a result (theorem \ref{higher}) concerning higher Chow groups of certain fibrations; this is a variant of a result of Vial's \cite{V4}.

\vskip0.6cm

\begin{convention} In this note, the word {\sl variety\/} will refer to a reduced irreducible scheme of finite type over the field of complex numbers $\C$. For any variety $X$, we will denote by $A_j(X)$ the Chow group of dimension $j$ cycles on $X$ 
with $\QQ$--coefficients.
For $X$ smooth of dimension $n$, the notations $A_j(X)$ and $A^{n-j}(X)$ will be used interchangeably. 

The notation 
$A^j_{hom}(X)$ will be used to indicate the subgroups of 
homologically trivial cycles.
For a morphism between smooth varieties $f\colon X\to Y$, we will write $\Gamma_f\in A^\ast(X\times Y)$ for the graph of $f$, and ${}^t \Gamma_f\in A^\ast(Y\times X)$ for the transpose correspondence.

The contravariant category of Chow motives (i.e., pure motives with respect to rational equivalence as in \cite{Sc}, \cite{MNP}) will be denoted $\MM_{\rm rat}$. 
\end{convention}

 \section{The Calabi--Yau threefolds}
 
 \begin{theorem}[Kapustka--Rampazzo \cite{KR}]\label{kr} Let $X,Y$ be a general pair of Calabi--Yau threefolds in the family $\bar{\XX}_{25}$ that are dual to one another (in the sense of \cite[Section 2]{KR}). Then $X$ and $Y$ are not birational, and so
   \[ [X]\not= [Y]\ \ \ \hbox{in}\ K_0(\hbox{Var}(\C)) \ .\]
  
  However, one has
   \[  ([X] -[Y]) \LL^2=0\ \ \ \hbox{in}\ K_0(\hbox{Var}(\C)) \ .\]
   
   Moreover, $X$ and $Y$ are derived equivalent, i.e. there is an isomorphism of bounded derived categories
     \[ D^b(X)\cong D^b(Y)\ .\]
     
 In particular, there is an isomorphism of polarized Hodge structures
     \[ H^3(X,\Z)\ \cong\ H^3(Y,\Z)\ .\]
    \end{theorem}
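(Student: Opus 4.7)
The plan is to exploit the explicit dual construction of \cite{KR}. A general pair $X, Y$ in the family $\bar{\XX}_{25}$ arises from dual linear sections of a certain ambient variety, and \cite{KR} provides an incidence correspondence $W$ with two projections $p\colon W\to X$ and $q\colon W\to Y$. Every assertion in the theorem should flow from analyzing this diagram.

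For the $L$--equivalence, I would compute the class $[W]\in K_0(\hbox{Var}(\C))$ along each projection. Stratifying the base of $p$ (respectively $q$) according to the dimension of the fiber, the open stratum should contribute a term of the form $[X]\cdot P(\LL)$ (respectively $[Y]\cdot Q(\LL)$) for polynomials $P, Q$ determined by the generic fiber type, while the degenerate strata should match after an additional cut--and--paste in the base. Comparing the two expressions for $[W]$ then yields the relation $([X]-[Y])\LL^2 = 0$, with the power $\LL^2$ reflecting the codimension of the exceptional locus where the fibers jump.

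For the derived equivalence, the natural tool is Kuznetsov's Homological Projective Duality. The ambient variety admits a Lefschetz decomposition whose HPD counterpart induces a semiorthogonal decomposition of $D^b(W)$; the ``Calabi--Yau'' component of this decomposition should be identified both with $D^b(X)$ and with $D^b(Y)$, producing the equivalence as a suitably twisted Fourier--Mukai transform with kernel supported on $W$. Once this is in hand, the isomorphism of polarized Hodge structures on $H^3$ follows automatically, either by specializing Orlov's Hochschild (co)homology comparison to the middle cohomology of a Calabi--Yau threefold, or by directly verifying that the correspondence $q_\ast p^\ast$ is an integral Hodge isometry.

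The main obstacle, and the step that genuinely requires input beyond the diagram, is the non--birationality of $X$ and $Y$. This must be detected by a birational invariant: one expects that either the integral Picard lattice together with its intersection pairing, the Hilbert scheme of lines or conics, or the trilinear cubic form on $H^2$ distinguishes the two sides in a way incompatible with a birational map (which for Calabi--Yau threefolds is automatically an isomorphism in codimension one). Carrying this out for the specific family $\bar{\XX}_{25}$ is where the hardest work of \cite{KR} lies, and without a clever choice of invariant the argument does not close.
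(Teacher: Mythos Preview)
Your proposal misreads the role of this theorem in the paper. The statement is explicitly attributed to Kapustka--Rampazzo, and the paper's proof is accordingly two sentences long: ``Everything but the last phrase is in \cite{KR}. The isomorphism of Hodge structures is a corollary of the derived equivalence, in view of \cite[Proposition 2.1 and Remark 2.3]{OR}.'' In other words, the non--birationality, the $L$--equivalence, and the derived equivalence are simply quoted from \cite{KR}; the only thing the author adds is the observation that the polarized integral Hodge isomorphism on $H^3$ follows formally from the derived equivalence, via the result of Ottem--Rennemo.

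What you have written is instead an outline of how one might reprove the content of \cite{KR} from scratch. As a heuristic summary of the techniques behind \cite{KR} your sketch is not unreasonable --- computing $[W]$ along both projections for the $L$--equivalence, and an HPD--style argument for the derived equivalence, are indeed the mechanisms at play --- but it is not a proof, and you say so yourself: the non--birationality step ``does not close'' without identifying the specific invariant used in \cite{KR}. That is a genuine gap, not a minor omission; distinguishing birational types of derived--equivalent Calabi--Yau threefolds is precisely the delicate point, and your list of candidate invariants (Picard lattice, curve counts, cubic form) is speculation rather than argument.

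For the one part where the paper does supply its own reasoning --- the Hodge--structure isomorphism --- your first suggestion (deduce it from the derived equivalence) is exactly what the paper does, though the paper pins this down by citing \cite[Proposition 2.1 and Remark 2.3]{OR} rather than invoking Hochschild homology in general. Your alternative of checking directly that $q_\ast p^\ast$ is an integral Hodge isometry would require additional work not present in either your sketch or the paper.
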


     \begin{proof} Everything but the last phrase is in \cite{KR}. The isomorphism of Hodge structures is a corollary of the derived equivalence, in view of
     \cite[Proposition 2.1 and Remark 2.3]{OR}.
      \end{proof}
    
%
  
 \begin{remark} As explained in \cite{KR}, the threefolds $X,Y$ in the family $\bar{\XX}_{25}$ are a limit case of the Calabi--Yau threefolds in the family $\XX_{25}$ studied in \cite{BCP}, \cite{OR}.
 A pair of dual varieties $X,Y$ in the family $\XX_{25}$ are also derived equivalent and L-equivalent (the exponent of $\LL$ is, however, higher than in theorem \ref{kr}).  
 \end{remark}

 \section{Higher Chow groups and fibrations}
 
 \begin{definition}[Bloch \cite{B2}, \cite{B3}] Let $\Delta^j\cong\A^j(\C)$ denote the standard $j$--simplex. For any quasi--projective variety $M$ and any $i\in\Z$, let $z_i^{simp}(M,\ast)$ denote the simplicial complex where $z_i(X,j)$ is the group of $(i+j)$--dimensional algebraic cycles in $M\times\Delta^j$ that meet the faces properly. Let $z_i^{}(M,\ast)$ denote the single complex associated to $z_i^{simp}(M,\ast)$. The higher Chow groups of $M$ are defined as
    \[ A_i(M,j):= H^j( z_i^{}(M,\ast)\otimes\QQ)\ .\]
 \end{definition}
 
 \begin{remark} Clearly one has $A_i(M,0)\cong A_i(M)$. Higher Chow groups are related to higher algebraic $K$--theory: there are isomorphisms
   \begin{equation}\label{K} \gr_\gamma^{n-i} K_j(M)_\QQ\cong A_i(M,j)  \ \ \ \hbox{for\ all\ }i,j \end{equation}
   where $K_j(M)$ is Quillen's higher $K$--theory group associated to the category of coherent sheaves on $M$, and $\gr^\ast_\gamma$ is a graded for the $\gamma$--filtration \cite{B2}. Higher Chow groups are also related to Voevodsky's motivic cohomology (defined as hypercohomology of a certain complex of Zariski sheaves) \cite{Fr}, \cite{MVW}.
  \end{remark}

 For later use, we establish the following result, which is a variant of a result of Vial's \cite{V4}:
 
 \begin{theorem}\label{higher} Let $\pi\colon M\to B$ be a flat projective morphism between smooth quasi--projective varieties of relative dimension $m$. Assume that for every $b\in B$, the fibre $M_b:=\pi^{-1}(b)$ has 
    \[ A_i(M_b)=\QQ\ \ \ \forall i\ .\]
    
   \noindent
   (\rom1) 
  The maps
  \[  \Phi_\ast:= \sum_{k=0}^{m} h^{m -k}\circ \pi^\ast\colon\ \ \ \bigoplus_{k=0}^{m} A_{\ell -k}(B,j)\ \to\ A_{\ell}(M,j) \]
  and
  \[  \Psi_\ast:= \sum_{k=0}^{m} \pi_\ast\circ  h^{k}\colon\ \ \   A_{\ell}(M,j)\ \to\   \bigoplus_{k=0}^{m} A_{\ell -k}(B,j) \]    
  are both isomorphisms, for any $\ell$ and $j$.
  (Here $h^k$ denotes the operation of intersecting with the $k$--th power of a hyperplane section $h\in A^1(M)$.)
  
  \noindent
  (\rom2) Set $V_k:=(\Psi_\ast)^{-1} A_{\ell-k}(B,j)\ \subset A_\ell(M,j)$. Then
    \[  (\Phi_\ast \Psi_\ast)\vert_{V_{m}}   = \lambda\, \ide\ ,\]
    for some non--zero $\lambda\in\QQ$.
  
  \end{theorem}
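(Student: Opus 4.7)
The plan adapts Vial's strategy \cite{V4} for ordinary Chow groups to the higher-Chow setting, via a relative Künneth decomposition of the diagonal and noetherian induction on $\dim B$.

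\textbf{Step 1 (the easy composition).} Using the projection formula, the $j$-th component of $(\Psi_*\Phi_*)(\alpha_0,\ldots,\alpha_m)$ equals $\sum_{k=0}^m \pi_*(h^{j+m-k}) \cdot \alpha_k$. For $k>j$, the dimension of $h^{j+m-k}$ exceeds $\dim B$, so the pushforward vanishes. For $k=j$, one has $\pi_*(h^m) = d\cdot [B]$ with $d:=\deg(h^m\vert_{M_b})>0$ constant by flatness. Hence $\Psi_*\Phi_*$ is lower-triangular with $d$ on the diagonal, and so is an isomorphism. This already yields injectivity of $\Phi_*$ and surjectivity of $\Psi_*$.

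\textbf{Step 2 (relative decomposition of the diagonal).} Consider the correspondence
$$ \Gamma := \sum_{k=0}^m p_1^* h^k \cdot p_2^* h^{m-k} \cdot [M\times_B M]\ \in\ A^*(M\times M). $$
The projection formula combined with base change $q_{2*} q_1^* = \pi^*\pi_*$ gives $\Gamma_* = \Phi_*\Psi_*$ as operators on $A_\ell(M,j)$. Since $A_*(M_b)_\QQ = \QQ$ in each degree, an intersection-number computation gives the Künneth-type identity $\Delta_{M_b} = (1/d)\sum_k h^k \otimes h^{m-k}$ in $A^m(M_b\times M_b)$. Hence $\Gamma - d\cdot\Delta_M$ restricts to zero on the generic fibre of $M\times_B M \to B$, and the localization exact sequence for higher Chow groups produces a proper closed $Z \subsetneq B$ and a class $\xi \in A^*((M\times_B M)_Z)$ with $\Gamma = d\cdot\Delta_M + \iota_*\xi$.

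\textbf{Step 3 (proof of (ii), then (i)).} For $\alpha \in V_m$, the defining vanishing $\pi_*(h^k\alpha) = 0$ for $k<m$ yields $\Phi_*\Psi_*(\alpha) = \pi^*\pi_*(h^m\alpha) = \Gamma_*(\alpha) = d\cdot\alpha + (\iota_*\xi)_*(\alpha)$. The action of $\iota_*\xi$ factors through the restriction $\alpha\mapsto\alpha\vert_{M_Z}$, since $p_1\circ\iota$ lands in $M_Z \hookrightarrow M$. Applying noetherian induction on $\dim B$ to a desingularization $\wt Z \to Z$ and the base-changed family $M\times_B \wt Z \to \wt Z$ (whose fibres still satisfy the Chow-triviality hypothesis), the correction term acts as a scalar on the $V_m$-component, giving $\Phi_*\Psi_*\vert_{V_m} = \lambda\cdot\ide$ with $\lambda \neq 0$. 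Finally, (ii) forces $\ker\Psi_* = 0$: any $\alpha\in\ker\Psi_*$ lies in $V_m$ and satisfies $\lambda\alpha = \Phi_*\Psi_*(\alpha) = 0$. Combined with Step 1, this makes $\Psi_*$ bijective, and then $\Phi_* = \Psi_*^{-1}\circ(\Psi_*\Phi_*)$ is also an isomorphism, completing (i).

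\textbf{Main obstacle.} The technical heart is Step 3: controlling the correction term $(\iota_*\xi)_*$ on $V_m$. The subset $Z$ is generically singular, which forces resolution and verification that the inductive hypothesis propagates through the base change; one must also ensure that correspondences act on higher Chow groups compatibly with the localization sequence (via Bloch's moving lemma), and track scalars through the induction to confirm $\lambda\neq 0$.
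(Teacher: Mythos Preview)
Your Step 1 is correct and gives injectivity of $\Phi_*$ and surjectivity of $\Psi_*$; Step 2 is also right in spirit (modulo the technical issue that $M\times_B M$ need not be smooth, so restricting $\Gamma-d\Delta_M$ to a fibre requires care). The genuine gap is in Step 3: the assertion that noetherian induction forces the correction term $(\iota_*\xi)_*$ to act as a \emph{scalar} on $V_m$ is unjustified. The inductive hypothesis tells you that $A_\ell(M_{\wt Z},j)\cong\bigoplus_k A_{\ell-k}(\wt Z,j)$, but $\xi$ is an \emph{arbitrary} cycle on $(M\times_B M)_Z$, not one built from powers of $h$, so $(\iota_*\xi)_*$ has no reason to preserve $V_m$, let alone act there by a scalar; its image lies in the pushforward of $A_*(M_Z,j)$, which meets every summand $V_0,\dots,V_m$. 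Neither the scalar claim nor $\lambda\neq 0$ follows. What the induction \emph{can} give (after pushing the decomposition of $A_*(M_{\wt Z},j)$ forward to $M$) is that $\ima\bigl((\iota_*\xi)_*\bigr)\subset\ima(\Phi_*)$; combined with $d\,\alpha=\Phi_*\Psi_*(\alpha)-(\iota_*\xi)_*(\alpha)$ this yields $\alpha\in\ima(\Phi_*)$ for every $\alpha$, i.e.\ $\Phi_*$ is surjective, hence (\rom1). So the order of deduction should be reversed: (\rom1) first, and then (\rom2) is immediate, since (\rom1) together with your triangular computation in Step 1 gives $V_m=\pi^* A_{\ell-m}(B,j)$, and one checks directly $\Phi_*\Psi_*(\pi^*\beta)=\pi^*\bigl(\pi_*(h^m)\cdot\beta\bigr)=d\,\pi^*\beta$, so $\lambda=d\neq 0$.

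For comparison, the paper's own route to (\rom1) is quite different from a relative decomposition of the diagonal: under the hypothesis (sufficient for the applications) that every fibre is $\PP^m$, it takes a general hyperplane section $H\subset M$, restricts to the open of $B$ over which $H\to B$ is a $\PP^{m-1}$-fibration, and inducts on the relative dimension $m$ via the localization sequence; the base of the induction is an $\A^m$-bundle, handled by Quillen's $K$-theory computation together with the comparison $A_i(\,\cdot\,,j)\cong\gr_\gamma K_j(\,\cdot\,)_\QQ$. Part (\rom2) is then deduced from (\rom1) exactly as indicated above.
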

 
 \begin{proof} 
 
 \noindent(\rom1)
 For $j=0$ (i.e., for usual Chow groups), this is exactly \cite[Theorem 3.2]{V4}. For arbitrary $j$ (i.e., for higher Chow groups), a straightforward although laborious proof would consist in convincing the reader that everything Vial does in the proof of \cite[Theorem 3.2]{V4} also applies to higher Chow groups. Indeed, all formal properties of Chow groups exploited in loc. cit. also hold for higher Chow groups.
 
 Under the simplifying assumption that all fibres $M_b$ are isomorphic to $\PP^m$ (which will be the case when we apply theorem \ref{higher} in this note), a quick proof could be as follows. Let $H\subset M$ be a general hyperplane section, and let $U=\subset B$ be the open over which the fibres of the restricted morphism $\pi\vert_H\colon H\to B$ are isomorphic to $\PP^{m-1}$. Let $M_U:=\pi^{-1}(U)$, and let us consider the restricted morphism 
   \[\pi\vert_U\colon M_U\to U\ .\] 
   Using the localization sequence for higher Chow groups and noetherian induction, we are reduced to proving (\rom1) for $\pi\vert_U$.
   Let us consider the open $M^\prime_U:=M_U\setminus (H\cap M_U)$. The fibres of the morphism $\pi^\prime\colon M^\prime_U\to U$ are isomorphic to $\A^m$. There is a commutative diagram with exact rows
   \[ \begin{array}[c]{ccccccc}
      \to&  A_i(M^\prime_U,j+1) &\to& A_i(H,j) &\to& A_i(M_U,j) &\!\!\!\to\\
      &&&&&&\\
     & \uparrow{\scriptstyle (\pi^\prime)^\ast } &&\ \ \ \ \ \ \ \ \ \ \ \ \ \  \ \ \ \   \uparrow{\scriptstyle \sum_{k=0}^{m-1} h^{m-1-k}\circ(\pi\vert_H)^\ast }&&\ \ \ \ \ \ \ \ \ \ \ \ \ \  \  \uparrow{\scriptstyle \sum_{k=0}^m h^{m-k}\circ (\pi\vert_U)^\ast }&\\
     &&&&&&\\
     \to& A_i(U,j+1)&\to& {\displaystyle \bigoplus_{k=0}^{m-1}} A_{k}(U,j) &\to& {\displaystyle \bigoplus_{k=0}^{m}} A_{k}(U,j)    &\!\!\!\to \\
         \end{array}\]
   Doing an induction on the fibre dimension $m$, it will suffice to prove that $(\pi^\prime)^\ast$ is an isomorphism for all $i,j$. But this follows from the corresponding result for $K$--theory \cite[Proposition 4.1]{Q}, in view of the isomorphism (\ref{K}) and the fact that the pullback $(\pi^\prime)^\ast\colon K_j(U)\to K_j(M^\prime_U)$ respects the $\gamma$--filtration. This proves that $\Phi_\ast$ is an isomorphism. The argument for $\Psi_\ast$ is similar.

 \noindent
 (\rom2) 
 The direct summand $V_{m}$ can be identified as
   \[ V_{m}=\bigcap_{k=0}^{m-1} \ker \bigl( \pi_\ast\circ h^{k}\bigr)\ \ \ \ \subset\ A_\ell(M,j)\ .\]
   Using this description, it is readily checked that 
   \[ V_{m}= \pi^\ast A_{\ell-m}(X,j)\ .\]
   This implies (\rom2).
 
 \end{proof} 
  
 \begin{remark} 
 In case $B$ and $M$ are smooth projective, theorem \ref{higher} can be upgraded to a relation of Chow motives \cite[Theorem 4.2]{V4}. 
 In the more general case where $B$ and $M$ are smooth but only quasi--projective, perhaps one can relate $B$ and $M$ in the category $DM^{eff}_{gm}$ of Voevodsky motives ? If so, the relation of higher Chow groups obtained in theorem \ref{higher} would be an immediate consequence, since higher Chow groups (with $\QQ$--coefficients) can be expressed as Hom--groups in $DM^{eff}_{gm}$ \cite{Fr}, \cite{MVW}. 
 \end{remark}

\section{Main result}     

\begin{theorem}\label{main} Let $X,Y$ be a pair of Calabi--Yau threefolds in the family $\bar{\XX}_{25}$ that are dual to one another, in the sense of \cite[Section 2]{KR}. 
 Then there is an isomorphism 
   \[ h(X)\cong h(Y)\ \ \ \hbox{in}\ \MM_{\rm rat}\ .\]
  \end{theorem}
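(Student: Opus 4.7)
The plan is to follow the template of \cite{L}, applied now to the HPD-style diagram from \cite[Section 2]{KR}. In the KR construction, $X$ and $Y$ arise as images of a common ``roof'': there is a variety $Z$ together with two flat projective morphisms $p\colon Z\to X$ and $q\colon Z\to Y$ whose fibres have trivial Chow groups (generically projective spaces). Once this roof is in place, applying theorem \ref{higher} to both $p$ and $q$ decomposes the Chow and higher Chow groups of $Z$ in two ways: once in terms of Tate-twisted copies attached to $X$, once in terms of those attached to $Y$. Comparing the two decompositions then delivers the isomorphism of motives.

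First I would construct $Z$ and $p,q$ from the explicit data of \cite{KR} and verify the hypothesis $A_i(Z_b)=\QQ$ of theorem \ref{higher} at every fibre. When the natural roof fails to be flat, one restricts to dense opens $U\subset X$ and $V\subset Y$, handles the bad locus via the localization sequence for higher Chow groups, and reassembles by noetherian induction, exactly as in the proof of theorem \ref{higher}(\rom1). Theorem \ref{higher}(\rom1) then produces explicit correspondences realizing
\[
A_\ell(Z,j)\ \cong\ \bigoplus_{k=0}^{m_p} A_{\ell-k}(X,j)\ \cong\ \bigoplus_{k=0}^{m_q} A_{\ell-k}(Y,j),
\]
and part (\rom2) identifies the top summand on each side with the pullback image from $X$ (resp.\ $Y$); stripping off the Tate pieces matches the two decompositions and isolates a correspondence between $X$ and $Y$ itself.

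To upgrade this Chow-group identification to an isomorphism in $\MM_{\rm rat}$, I would assemble the above data into explicit cycles $\Gamma\in A^3(X\times Y)$ and $\Gamma^\prime\in A^3(Y\times X)$ built from ${}^t\Gamma_p$, $\Gamma_q$, powers of a hyperplane class on $Z$, and the projectors onto $V_{m_p}$ and $V_{m_q}$ furnished by theorem \ref{higher}(\rom2). That same part (\rom2) forces $\Gamma^\prime\circ\Gamma$ and $\Gamma\circ\Gamma^\prime$ to be nonzero scalar multiples of the diagonals $\Delta_X$ and $\Delta_Y$ on the essential ``CY'' summands, while on the Tate summands the identification is already literal (by theorem \ref{kr} together with the CY hypothesis, which pins down $A_0$ and the Picard groups on each side). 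The principal obstacle will be the very first step: producing a roof $Z$ whose fibres satisfy $A_\ast(Z_b)=\QQ$ over \emph{every} point, including the degenerate strata inherent to the $\bar{\XX}_{25}$ construction. This is a geometric analysis of the special fibres of the KR diagram; once settled, the remainder is a formal manipulation of correspondences built on theorem \ref{higher}.
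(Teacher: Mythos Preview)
The gap is geometric. In the Kapustka--Rampazzo diagram there is \emph{no} roof $Z$ with flat morphisms to both $X$ and $Y$ whose fibres have trivial Chow groups. The common variety is $M$, a hyperplane section of the flag variety $F$, but $M$ maps to the ambient Grassmannians $G(2,5)$ and $G(3,5)$, not to $X$ and $Y$; the morphism $f\colon M\to G(2,5)$ is a $\PP^1$-fibration over $G(2,5)\setminus X$ and a $\PP^2$-fibration over $X$, hence not flat, and theorem \ref{higher} cannot decompose $A_\ast(M,j)$ purely in terms of $A_\ast(X,j)$. The preimages $D=f^{-1}(X)$ and $E=g^{-1}(Y)$ are two \emph{different} codimension-$2$ subvarieties of $M$, neither fibred over the other threefold. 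So the ``two decompositions of the same $Z$, strip off the Tate pieces'' manoeuvre has nothing to act on.

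The paper's route is therefore indirect: since $F$ has trivial Chow groups, $M$ inherits a Chow--K\"unneth decomposition whose only non-Tate summand is $h^7(M)$, and one proves $h^3(X)\cong h^7(M)(2)\cong h^3(Y)$ separately. The correspondence realising $h^3(X)\cong h^7(M)(2)$ is $\Gamma_i\circ{}^t\Gamma_{f_X}$ (pull back along the $\PP^2$-bundle $f_X\colon D\to X$, then push forward along $i\colon D\hookrightarrow M$). Checking that this is an isomorphism on $A^\ast_{hom}$ takes two non-formal inputs your outline does not supply: an explicit computation of the class $[D]\in A^2(M)$ in terms of $h^2$ and pullbacks from $G(2,5)$ (lemma \ref{l2}), and the surjectivity $i_\ast(f_X)^\ast\colon A^i_{hom}(X)\twoheadrightarrow A^{i+2}_{hom}(M)$ (lemma \ref{l1}). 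It is in this surjectivity that higher Chow groups genuinely enter: one establishes a ``strong property'' (surjectivity of the higher cycle class map onto the relevant weight/Hodge piece of Borel--Moore homology) for the open $V=G(2,5)\setminus X$ using the Hodge conjecture for the threefold $X$, and then transports it to $U=M\setminus D$ via theorem \ref{higher} applied to the honest $\PP^1$-fibration $U\to V$. In other words, theorem \ref{higher} is used on the open complement, not on a global flat roof over $X$; your plan invokes the right tool but aims it at a geometric object that is not there.
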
 
     
  \begin{proof} 
  First, to simplify matters, let us slightly cut down the motives of $X$ and $Y$. It is known \cite{KR} that $X$ and $Y$ have Picard number $1$. A routine argument 
  gives a decomposition of the Chow motives
  \[ \begin{split}   h(X)&= \mathds{1} \oplus \mathds{1}(1)\oplus h^3(X) \oplus \mathds{1}(2) \oplus \mathds{1}(3)\ ,\\
                   h(Y)&= \mathds{1} \oplus \mathds{1}(1)\oplus h^3(Y) \oplus \mathds{1}(2) \oplus \mathds{1}(3)\ \ \ \ \ \ \hbox{in}\ \MM_{\rm rat}\ ,\\
                  \end{split} \] 
  where $\mathds{1}$ is the motive of the point $\hbox{Spec}(k)$. (The gist of this ``routine argument'' is as follows: let $H\in A^1(X)$ be a hyperplane section. Then 
    \[ \pi^{2i}_X:= c_i H^{3-i}\times H\ \ \ \in\ A^3(X\times X)\ , \ \ \ 0\le i\le 3\ ,\] 
    defines an orthogonal set of projectors lifting the K\"unneth components, for appropriate $c_i\in\QQ$. One can then define $\pi^3_X=\Delta_X-\sum_i \pi^{2i}_X\in A^3(X\times X)$, and $h^j(X)=(X,\pi^i_X,0)\in \MM_{\rm rat}$, and ditto for $Y$.)
  
  To prove the theorem, it will thus suffice to prove there is an isomorphism of motives
    \begin{equation}\label{iso3}   h^3(X)\cong h^3(Y)\ \ \ \hbox{in}\ \MM_{\rm rat}\ .\end{equation}
    We observe that the above decomposition (plus the fact that $H^\ast(h^3(X))=H^3(X)$ is odd--dimensional) implies equality
     \[ A^\ast(h^3(X)) = A^\ast_{hom}(X)\ ,\]
     and similarly for $Y$.
    
%
%
    
To construct the isomorphism (\ref{iso3}), we need look no further than the construction of the threefolds $X, Y$. As explained in \cite[Section 2]{KR}, the Calabi--Yau threefolds $X, Y$ are related via a diagram
  \begin{equation}\label{diag} \begin{array}[c]{ccccccccc}   && D & \xrightarrow{i} & M & \xleftarrow{j} & E && \\
                &&&&&&&&\\
                   &{}^{\scriptstyle f_X} \swarrow \ \ && {\scriptstyle f} \swarrow \ \ \ &\downarrow & \ \ \ \searrow {\scriptstyle g} & & \ \ \searrow {}^{\scriptstyle g_Y} & \\
                   &&&&&&&&\\
                   X & \hookrightarrow & G(2,5) & \xleftarrow{\pi_1} & F & \xrightarrow{\pi_2}& G(3,5) & \hookleftarrow & Y\\
                   \end{array}\end{equation}
                   
         Here $G(j,5)$ denotes the Grassmannian of $j$--dimensional subspaces in a $5$--dimensional vector space. The variety $F$ is the flag variety parametrizing 
         pairs $(V,W)\in G(2,5)\times G(3,5)$ such that $V\subset W$. The variety $M\subset F$ is a hyperplane section. The Calabi--Yau varieties $X,Y$ are closed subvarieties of $G(2,5)$ resp. $G(3,5)$, and the closed subvarieties $D,E$ are defined as $f^{-1}(X)$ resp. $g^{-1}(Y)$.
         The morphisms $f,g$ are $\PP^1$--fibrations over the opens $G(2,5)\setminus X$ resp. $G(3,5)\setminus Y$, but the restrictions $f_X, g_Y$ are $\PP^2$--fibrations.
         
   The flag variety $F$ has trivial Chow groups (i.e. $A^\ast_{hom}(F)=0$), and so $F$ has a Chow--K\"unneth decomposition (this is a general fact for any smooth projective variety with trivial Chow groups: since all cohomology is algebraic, a K\"unneth decomposition exists; since $F\times F$ again has trivial Chow groups, the K\"unneth decomposition is a Chow--K\"unneth decomposition). By a standard trick (cf. for instance \cite[Lemma 5.2]{IMS}), this induces a Chow--K\"unneth decomposition $\{\pi^j_M\}$ for the hyperplane section $M\subset F$, with the property that 
   \[ (M,\pi^j_M)\cong \oplus \mathds{1}(\ast)\ \ \ \hbox{in}\ \MM_{\rm rat}\ \ \ \hbox{for\ all\ }j\not=7=\dim M\ .\]
   In particular, we have that
   \[  A^i_{hom}(M)=  A^i(h^7(M)):= (\pi^7_M)_\ast A^i(M)\ \ \ \hbox{for\ all\ }i\ .\]
   
   We now make the following claim: 
   
   \begin{claim}\label{cla}
   There are isomorphisms
   \[   \begin{split}   \Gamma_1\colon\ \ h^3(X)\ &\xrightarrow{\cong}\ h^7(M)(2)\ ,\\
                                                                              \Gamma_2\colon\ \ h^3(Y)\ &\xrightarrow{\cong}\ h^7(M)(2)\ \ \ \ \ \hbox{in}\ \MM_{\rm rat}\ .\ \\      
                                                                              \end{split}\]
    \end{claim}                                                                          
                                                                              
                                                                         This claim obviously suffices to prove (\ref{iso3}). To prove the claim, let us treat the isomorphism $\Gamma_1$ in detail (the same argument applies to $\Gamma_2$, upon replacing $X$ and $G(2,5)$ by $Y$ resp. $G(3,5)$). To prove the claim for $\Gamma_1$, it will suffice to find correspondences $\Gamma_1\in A^{5}(X\times M), \Psi_1\in A^{5}(M\times X)$ with the property that
                            \begin{equation}\label{isoact} \begin{split}   (\Xi_1)_\ast(\Gamma_1)_\ast=\ide\colon\ \ \ A^i_{hom}(X)\ &\to\ A^i_{hom}(X)\ ,\\        
                                                                                               (\Gamma_1)_\ast (\Xi_1)_\ast=\ide\colon\ \ \ A^i_{hom}(M)\ &\to\ A^i_{hom}(M)\ .\\
                                                                                               \end{split}
                                                                                               \end{equation}
                                   (Indeed, let us assume one has correspondences $\Gamma_1,\Xi_1$ satisfying (\ref{isoact}). By what we have said above, this means that
                          \begin{equation}\label{iso2} \begin{split}   & (\pi^3_X\circ\Xi_1\circ \pi^7_M\circ\Gamma_1\circ\pi^3_X)_\ast  =(\pi^3_X)_\ast\colon\ \ \   
                             A^i_{}(X)\ \to\ A^i_{}(X)\ ,\\ 
                                                                     &    (\pi^7_M\circ\Gamma_1\circ \pi^3_X\circ\Xi_1\circ\pi^7_M)_\ast  =(\pi^7_M)_\ast\colon\ \ \  A^i(M)\ \to\ A^i_{}(M)\ .\\
                                                                                               \end{split}
                                                                                               \end{equation}

                                   There exists a field $k\subset\C$, finitely generated over $\QQ$, such that $X,M,\pi^j_X,\pi_M^j\Gamma_1,\Xi_1$ are defined over $k$. Because $\C$ is a universal domain, for any finitely generated field extension $K\supset k$, there is an inclusion $K\subset \C$. Thus, the natural maps $A^i(X_K)\to A^i(X_\C)$ and   $A^i(M_K)\to A^i(M_\C)$ are injections \cite[Appendix to Lecture 1]{B}. This implies that the relations (\ref{iso2}) also hold over $K$. Manin's identity principle then gives that 
                  \[ \Gamma_1\colon\ \  \ h^3(X_k)\ \to\ h^7(M_k)(2)\ \ \ \hbox{in}\ \MM_{\rm rat} \]
                  is an isomorphism, and so $\Gamma_1$ induces an isomorphism of motives over $\C$ as claimed.)
       
Before proving the claim, let us introduce some lemmas.

\begin{lemma}\label{l1} Set--up as above. The composition
  \[ A^i_{hom}(X)\ \xrightarrow{(f_X)^\ast}\ A^i_{hom}(D)\ \xrightarrow{i_\ast}\ A^{i+2}_{hom}(M) \]
  is surjective, for any $i$.
\end{lemma}

\begin{proof} Let us write $U:=M\setminus D$, and $G:=G(2,5)$. By assumption, $U$ is a $\PP^1$--fibration over $V:=G\setminus X$. 

 For any $i$, there is a commutative diagram with exact rows
  \[ \begin{array}[c]{cccccccc}
      \ \  \to & A_i(V,1) &\to & A_i(X) &\to& A_i(G) &\to&    A_i(V) \\
        &\downarrow  &&\downarrow&&\downarrow&&\downarrow {}\\
       0 \to & W^{-2i} H_{2i-1}(V,\QQ)\cap F_{-i} &\to& H_{2i}(X,\QQ)\cap F_{-i} &\to& H_{2i}(G,\QQ)&\to& W^{-2i} H_{2i}(V,\QQ)\\
        \end{array}\]
        where vertical arrows are (higher) cycle class maps into Borel--Moore homology, and $W^\ast, F_\ast$ denote the weight filtration resp. the Hodge filtration on Borel--Moore homology \cite{PS}.
    (The upper row is exact thanks to localization for higher Chow groups \cite{B2}, \cite{B3}, \cite{Lev}. The lower row is exact because the category of polarizable pure Hodge structures is semisimple \cite{PS}. For the cycle class map from higher Chow groups into Borel--Moore homology, cf. \cite[Section 4]{Tot}.)   
The Grassmannian $G$ has trivial Chow groups. 
Using the fact that the Hodge conjecture is true for the threefold $X$, this implies that the cycle class map induces isomorphisms
  \[ A^i(V)\ \xrightarrow{\cong}\ W ^{-2i} H^{2i}(V,\QQ)\ ,\]
  and the higher cycle class map induces a surjection
  \[   A_i(V,1)\ \twoheadrightarrow\   W^{1-2i}H_{2i-1}(V,\QQ)\cap F_{1-2i}\ .\]
  (These two facts together can be paraphrased by saying that $V$ satisfies a variant\footnote{It is a variant, because in \cite{Tot} only the weight filtration and not the Hodge filtration is taken into account. This works fine for the linear varieties considered in \cite{Tot}, but not for the varieties $U,V$ under consideration here.} of the ``strong property'' of Totaro's \cite[Section 4]{Tot}.)
  Using theorem \ref{higher}, plus the corresponding property of cohomology, this implies that $U$ has the same property (i.e., $U$ satisfies the strong property).
  
  For any $i$, there is a commutative diagram with exact rows
  \[ \begin{array}[c]{cccccccc}
       \to & A_i(U,1) &\to & A_i(D) &\to& A_i(M) &\to&    A_i(U) \\
        &\downarrow  &&\downarrow&&\downarrow&&\downarrow {}\\
        \to & W^{-2i} H_{2i-1}(U,\QQ)\cap F_{-i} &\to& H_{2i}(D,\QQ)\cap F_{-i} &\to& H_{2i}(M,\QQ)&\to& W^{-2i} H_{2i}(U,\QQ)\\
        \end{array}\]
          By what we have just observed (the strong property for $U$), the left vertical arrow is a surjection and the right vertical arrow is an isomorphism. A quick diagram chase then reveals that pushforward induces a surjection
       \begin{equation}\label{DM} i_\ast\colon\ \ \ A_i^{hom}(D)\ \twoheadrightarrow\ A_i^{hom}(M) \ \ \ \forall i\ .\end{equation}
       
   Next, since $f_X\colon D\to X$ is a $\PP^2$--fibration, theorem \ref{higher} ensures that there are 
   isomorphisms
    \[ \begin{split}  \Phi_\ast:=  \sum_{ k=0}^2  h^{2-k}\circ  (f_X)^\ast\colon\ \  \bigoplus_{k=0}^2 A_{i-k}^{hom}(X)\ &\xrightarrow{\cong}\ A_i^{hom}(D)\ ,\\  
     \Psi_\ast:=  \sum_{ k=0}^2   (f_X)_\ast\circ h^k\colon\ \ A_i^{hom}(D)\ &\xrightarrow{\cong}\ \bigoplus_{k=0}^2 A_{i-k}^{hom}(X)\ .
       \end{split}\]
     We write
     \begin{equation}\label{decom}  \begin{split} A_i^{hom}(D)&= V_0\oplus V_1\oplus V_2\\ &:= (\Psi_\ast)^{-1}A_i^{hom}(X) \oplus   (\Psi_\ast)^{-1}A_{i-1}^{hom}(X)  \oplus (\Psi_\ast)^{-1}A_{i-2}^{hom}(X)\ .\\\end{split}
     \end{equation}
     
  To prove the lemma, it remains to understand the pushforward map (\ref{DM}).   
Precisely, we will show that one summand of the decomposition (\ref{decom}) already surjects onto $A_i^{hom}(M)$: 
  \begin{equation}\label{summands}\    
  \ima \bigl( V_0\oplus V_1\ \xrightarrow{i_\ast}\ A_i^{hom}(M)\bigr) \ \subset\  \ima \bigl( V_2\ \xrightarrow{i_\ast}\ A_i^{hom}(M)\bigr)  
     \ \ \ \hbox{for\ all\ }i\ .\end{equation}
  To see this, we observe that there is a commutative diagram of complexes 
  \[ \begin{array}[c]{ccccc}
           z_i(D,\ast) &\to& z_i(M,\ast) &\to& z_i(U,\ast)\\
           \downarrow&&\downarrow&&\downarrow\\
           z_i(X,\ast)&\to& z_i(G(2,5),\ast)&\to&z_i(V,\ast)\\
           \end{array}\]
         (where the vertical arrows are proper pushforward maps). This gives rise to a commutative diagram with long exact rows
      \begin{equation}\label{diag1}  \begin{array}[c]{ccccccc}
              \to &A_i(U,1) &\xrightarrow{\delta}& A_i(D) &\to& A_i(M)&\to\\
             &\ \ \ \ \ \  \downarrow{\scriptstyle (f_U)_\ast}&&\ \ \ \ \ \downarrow{\scriptstyle (f_X)_\ast}&&\ \ \ \ \downarrow{\scriptstyle f_\ast}&\\
              \to &A_i(V,1) &\xrightarrow{\delta^\prime}& A_i(X) &\to& A_i(G(2,5))&\to\\
              \end{array}\end{equation}
  
Let us now assume $b\in A_i^{hom}(D)$ lies in the summand $V_0$ of the decomposition (\ref{decom}). Then $(f_X)_\ast(b)$ is in $A_i^{hom}(X)$.
Since $A_i^{hom}(G(2,5))=0$, this means that $(f_X)_\ast(b)$ is in the image of the map $\delta^\prime$, say $(f_X)_\ast(b)=\delta^\prime(c^\prime)$. In view of theorem \ref{higher}, the element $c^\prime\in A_i(V,1)$ comes from an element $c\in A_i(U,1)$ lying in the direct summand (isomorphic to) $A_i(V,1)$. Using sublemma \ref{compat} below, this means that there is equality
  \[ \delta(c) = b - b_2\ \ \ \hbox{in}\ A_i(D)\ ,\]
  for some $b_2\in A_i(D)$ lying in the summand (isomorphic to) $A_{i-2}(X)$. It follows that
   \[ i_\ast(b)= i_\ast(b_2)\ \ \in\ \ima\Bigl(  A_{i-2}(X)\to A_i(D)\to A_i(M)\Bigr)\ .\]
   As $i_\ast(b_2)=i_\ast(b)$ is homologically trivial, the surjection (\ref{DM}) above shows that we may suppose $b_2$ is homologically trivial, and so we have found $b_2$ lying in the summand denoted $V_2$ (isomorphic to $A_{i-2}^{hom}(X)$). This shows that
   \[ i_\ast(b)\ \ \in\ \ima  \Bigl(  A_{i-2}^{hom}(X)\to A_i(D)\to A_i(M)\Bigr)=: \ima\bigl( V_2\ \to\ A_i(M)\bigr)  \ .\]

  Let us next assume that $b\in A_i^{hom}(D)$ lies in the summand $V_1$ of the decomposition (\ref{decom}). The commutative diagram of complexes up to quasi--isomorphism
    \[ \begin{array}[c]{ccccc}
           z_i(D,\ast) &\to& z_i(M,\ast) &\to& z_i(U,\ast)\\
           \downarrow {\scriptstyle h} &&\downarrow{\scriptstyle h}&&\downarrow{\scriptstyle h}\\
           z_{i-1}(D,\ast) &\to& z_{i-1}(M,\ast) &\to& z_{i-1}(U,\ast)\\
           \downarrow&&\downarrow&&\downarrow\\           
           z_{i-1}(X,\ast)&\to& z_{i-1}(G(2,5),\ast)&\to&z_{i-1}(V,\ast)\\
           \end{array}\]  
  gives rise to a commutative diagram with exact rows
     \begin{equation}\label{diag2} \begin{array}[c]{ccccccc}
              \to &A_i(U,1) &\xrightarrow{\delta}& A_i(D) &\to& A_i(M)&\to\\
              &&&&&&\\
             & \ \ \ \ \ \ \downarrow { \scriptstyle (f\vert_U)_\ast \circ h}   &&\ \ \ \ \ \ \downarrow {\scriptstyle (f_X)_\ast\circ  h} &&\ \ \ \ \ \ 
             \downarrow {\scriptstyle  f_\ast\circ  h} &\\
              &&&&&&\\
              \to &A_{i-1}(V,1) &\xrightarrow{\delta^\prime}& A_{i-1}(X) &\to& A_{i-1}(G(2,5))&\to\\
              \end{array}\end{equation}
  
  Reasoning just as above, we can find $c\in A_i(U,1)$ lying in the summand (isomorphic to) $A_{i-1}(V,1)$ such that
   \[  \delta(c) = b - b_2\ \ \ \hbox{in}\ A_i(D)\ ,\]
  where $b_2\in A_i(D)$ is in the summand (isomorphic to) $A_{i-2}(X)$. It follows once more that
   \[ i_\ast(b)= i_\ast(b_2)\ \ \in\ \ima\Bigl(  A_{i-2}(X)\to A_i(D)\to A_i(M)\Bigr)\ ,\]  
  and (using the surjectivity (\ref{DM})) that
  \[   i_\ast(b)\ \ \in\ \ima  \Bigl(  A_{i-2}^{hom}(X)\to A_i(D)\to A_i(M)\Bigr)=: \ima\bigl( V_2\ \to\ A_i(M)\bigr) \ .\]
  We have now proven the inclusion (\ref{summands}).

  Combining (\ref{DM}), (\ref{summands}) and theorem \ref{higher}(\rom2), we see that there is a surjection
  \[ A_{i-2}^{hom}(X)  \ \twoheadrightarrow\ A^{hom}_i(M)\ ,\]
  which is given by $i_\ast (f_X)^\ast$. This proves the lemma.

\end{proof}

In the proof of lemma \ref{l1} we have used the following sublemma:

\begin{sublemma}\label{compat} Given $i\in\Z$, let 
  \[  \Psi_\ast\colon\ \ A_i(D)=\bigoplus_{k=0}^2 A_{i-k}(X)\ ,\ \ \ \Psi_\ast\colon\ \ A_i(U,1)=\bigoplus_{k=0}^1 A_{i-k}(V,1)\ \]  
 be the decompositions of theorem \ref{higher}. Let $\delta\colon A_i(U,1)\to A_i(D)$ be the boundary map of the 
 localization exact sequence for the inclusion $D\subset M$. Then
  \[ \begin{split} \delta\bigl(  A_i(V,1)\bigr)\ &\subset\ A_i(X)\oplus A_{i-2}(X) \ ,\\
                              \delta\bigl(  A_{i-1}(V,1)\bigr)\ &\subset\ A_{i-1}(X)\oplus A_{i-2}(X) \ .\\     \end{split} \]
  \end{sublemma}

\begin{proof} For the first inclusion, we consider the commutative diagram (\ref{diag2}). In view of theorem \ref{higher}, the direct summand of $A_i(U,1)$ isomorphic to $A_i(V,1)$ is exactly the kernel of the map $(f\vert_U)_\ast \circ h$. As such, the image under $\delta$ is contained in
  \[ \ker \Bigl( A_i(D)\ \xrightarrow{ (f_X)_\ast\circ h}\ A_{i-1}(X)\Bigr)\ .\]
 Again applying theorem \ref{higher}, this kernel coincides with the two summands isomorphic to $A_i(X)$ resp. to $A_{i-2}(X)$, as claimed.
 
 The second inclusion is proven similarly, reasoning in the diagram (\ref{diag1}).
\end{proof}

\begin{lemma}\label{l2} Set--up as above. There is equality
  \[ D =\lambda h^2  +   h\cdot f^\ast(d_1) + f^\ast(d_2)  \ \ \ \hbox{in}\ A^2(M)\ ,\]
  for some non--zero $\lambda\in\QQ$ and some $d_i\in A^i(G(2,5))$, $i=1,2$.
\end{lemma}

\begin{proof} Let us consider the restriction $h^2\vert_U$ of $h^2\in A^2(M)$ to the open $U:=M\setminus D$. Let $f_U\colon U\to V$ be the restriction of the morphism $f$, where $V:=G(2,5)\setminus X$. As we have seen, $f_U$ is a $\PP^1$--fibration. It thus follows from theorem \ref{higher} that
  \[  h^2\vert_U =  h\cdot (f_U)^\ast(c_1) + (f_U)^\ast(c_2)\ \ \ \hbox{in}\ A^2(U)\ ,\]
  for some $c_i\in A^i(V)$, $i=1,2$.
  Let $\bar{c}_i\in A^i(G(2,5))$ be elements such that $\bar{c}_i\vert_U=c_i$ for $i=1,2$. The localization exact sequence (plus the fact that $D$ is irreducible of codimension $2$ in $M$) then implies that
  \begin{equation}\label{mu}  h^2 = h\cdot f^\ast(\bar{c}_1) + f^\ast(\bar{c}_2) +  \mu D  \ \ \ \hbox{in}\ A^2(M) \ ,\end{equation}
  for some $\mu\in\QQ$. 
  
  Let us assume, for a moment, that $\mu=0$. Then relation (\ref{mu}) would imply in particular that
  \[  h^2\vert_D =  \Bigl(h\cdot f^\ast(\bar{c}_1) + f^\ast(\bar{c}_2) \Bigr)\vert_D\ \ \ \hbox{in}\ A^2(D)\ .\]
  But this is absurd, for the right hand side maps to $0$ under pushforward $(f_X)_\ast$ whereas the left hand side maps to a non--zero multiple of $[X]\in A_3(X)$ under pushforward 
  $(f_X)_\ast$. It follows that $\mu\not=0$.
  
  Relation (\ref{mu}) proves the lemma; it suffices to define $\lambda:=1/\mu$ and $d_i:= \lambda \, \bar{c}_i\in A^i(G(2,5))$, $i=1,2$.
  \end{proof}

Armed with these lemmas, we are now ready to prove the claim \ref{cla} (and hence close the proof of the theorem). 
Let $d\in\Z$ be the non--zero integer such that $(f_X)_\ast(h^2)=d[X]$ in $A_3(X)$.
We define correspondences $\Gamma_1, \Xi_1$ as follows:
   \[ \begin{split}   \Gamma_1&:=  \Gamma_i\circ {}^t \Gamma_{f_X}        \ \ \ \in\ A^5(X\times M)\ ,\\
                            \Xi_1&:={1\over d\lambda} \, {}^t \Gamma_1 = {1\over d\lambda}\,   \Gamma_{f_X}\circ {}^t \Gamma_i   \ \ \ \in\ A^{5}(M\times X)\ \\
                           \end{split} \]
               (where $\lambda$ is the non--zero constant of lemma \ref{l2}). 
               
               Let us show these correspondences $\Gamma_1, \Xi_1$ verify
               the relations (\ref{isoact}).
                              By construction, the composition $\Xi_1\circ \Gamma_1$ acts on Chow groups in the following way:
          \[ (\Xi_1\circ\Gamma_1)_\ast\colon\ \ \     A_i^{}(X)\xrightarrow{(f_X)^\ast} A_{i+2}^{}(D) \xrightarrow{i_\ast} A_{i+2}^{}(M) 
           \xrightarrow{{1\over d\lambda}i^\ast} A_i^{}(D)
          \xrightarrow{(f_X)_\ast} A_i^{}(X)\ .\]     
         
     Thanks to lemma \ref{l2}, the map 
     \[  {1\over d\lambda}i^\ast i_\ast\colon \ \ A_{i+2}(D)\ \to\  A_i(D)       \]
     is the same as intersecting with 
       \[   {1\over d}\, \Bigl( h^2 +  {1\over\lambda}(f_X)^\ast(d_1\vert_X)\cdot h + {1\over\lambda}(f_X)^\ast(d_2\vert_X)\Bigr)\ \ \ \in \ A^2(D)\ .\] 
       In particular, if $b\in A_i(X)$ then
     \[     {1\over d\lambda}i^\ast i_\ast (f_X)^\ast(b)  ={1\over d}\, \Bigl(h^2\circ(f_X)^\ast(b)     + {1\over\lambda}h\circ (f_X)^\ast(b\cdot d_1\vert_X) +{1\over\lambda}(f_X)^\ast(b\cdot d_2\vert_X)\Bigr) \ \ \ \hbox{in}\ A_{i}(D)\ .\]
     But then, it follows that
     \[     \begin{split}   (f_X)_\ast {1\over d\lambda}i^\ast i_\ast (f_X)^\ast(b)  &= {1\over d}\, (f_X)_\ast \Bigl(h^2\circ(f_X)^\ast(b)     
         + {1\over\lambda}h\circ (f_X)^\ast(b\cdot d_1\vert_X) +{1\over\lambda}(f_X)^\ast(b\cdot d_2\vert_X) \Bigr)\\
                      &= {1\over d} (f_X)_\ast \bigl(h^2\circ(f_X)^\ast(b)\bigr)\\ 
                        &=    {1\over d}  (f_X)_\ast(h^2)\cdot  b      =b
     \ \ \ \hbox{in}\ A_{i}(X)\ .\\
     \end{split}\]
     That is, $\Xi_1\circ \Gamma_1$ acts as the identity on $A_i(X)$, which proves the first half of the claimed result (\ref{isoact}).
     
     It remains to prove the second half of (\ref{isoact}). The composition $\Gamma_1\circ\Xi_1$ acts on Chow groups in the following way:
     \[  (\Gamma_1\circ\Xi_1)_\ast\colon\ \ \  A_i^{hom}(M)\xrightarrow{{1\over d\lambda}i^\ast}A_{i-2}^{hom}(D)\xrightarrow{(f_X)_\ast} A_{i-2}^{hom}(X)
         \xrightarrow{(f_X)^\ast}
           A_i^{hom}(D)\xrightarrow{i_\ast} A_i^{hom}(M)\ .\]
           
     Let $a\in A_i^{hom}(M)$. In view of lemma \ref{l1}, we may suppose $a=i_\ast(f_X)^\ast (b)$, for some $b\in A_{i-2}^{hom}(X)$. But we have just checked that
     $(\Xi_1\circ\Gamma_1)_\ast(b)=b$ for any $b\in A_{i-2}(X)$, which means that
     \[  (f_X)_\ast {1\over d\lambda} i^\ast(a)=  (f_X)_\ast {1\over d\lambda} i^\ast i_\ast(f_X)^\ast(b) = b\ \ \ \hbox{in}\    A_{i-2}^{hom}(X)\ .\]
     Applying $i_\ast(f_X)^\ast$ on both sides, we conclude that
     \[ (\Gamma_1\circ\Xi_1)_\ast(a)= i_\ast(f_X)^\ast   (f_X)_\ast {1\over d\lambda} i^\ast i_\ast(f_X)^\ast(b) =  i_\ast(f_X)^\ast   b=a\ \ \ \hbox{in}\    A_{i}^{hom}(M)\ ,\]   
     i.e., $\Gamma_1\circ\Xi_1$ acts as the identity on $A^{hom}_i(M)$
     as claimed.
     
     We have now established the equalities (\ref{isoact}), and so we have proven the first half of claim \ref{cla}. The second half of claim \ref{cla} (i.e., the existence of the isomorphism $\Gamma_2$) is proven by the same argument, the only difference being that $X$ and $G(2,5)$ should be replaced by $Y$ resp. $G(3,5)$.
        \end{proof}   
        
\begin{remark} It would be interesting to refine theorem \ref{main} to an isomorphism with $\Z$--coefficients. Is it true that there are isomorphisms
  \[ \ \ \ A^i_{}(X)_{\Z}\ \xrightarrow{\cong}\ A^i_{}(Y)_{\Z}\ \ \ \forall i \]
  of Chow groups with $\Z$--coefficients ?
  
  The problem, in proving this, is that the fibration result (theorem \ref{higher}) is a priori only valid for (higher) Chow groups with rational coefficients.
\end{remark}

\begin{remark} It would also be interesting to prove theorem \ref{main} for a dual pair $(X,Y)$ of Calabi--Yau threefolds in the family $\XX_{25}$ of \cite{BCP}, \cite{OR}. In the absence of a nice diagram like (\ref{diag}) linking $X$ and $Y$, this seems considerably more difficult than theorem \ref{main}.
\end{remark}

\section{A corollary}

\begin{corollary}\label{cor} Let $X,Y$ be the Calabi--Yau threefolds constructed as in \cite{KR}. Let $M$ be any smooth projective variety. Then there are isomorphisms
  \[ N^j H^i(X\times M,\QQ)\cong N^j H^i(Y\times M,\QQ)\ \ \ \hbox{for\ all\ }i,j\ .\]
  (Here, $N^\ast$ denotes the coniveau filtration \cite{BO}.)
\end{corollary}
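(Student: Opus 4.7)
The corollary should follow formally from Theorem \ref{main} by a motivic argument. The plan is as follows. First, the isomorphism $h(X)\cong h(Y)$ in $\MM_{\rm rat}$ provided by Theorem \ref{main} can be tensored with the Chow motive $h(M)$ to yield an isomorphism
\[ h(X\times M)\ \cong\ h(Y\times M)\ \ \ \hbox{in}\ \MM_{\rm rat}\ . \]
Applying the Betti realization then produces, for every $i$, an isomorphism of rational Hodge structures $H^i(X\times M,\QQ) \cong H^i(Y\times M,\QQ)$ which is induced by an algebraic correspondence $\Gamma\in A^{\dim X+\dim M}\bigl((X\times M)\times (Y\times M)\bigr)$ (the codimension being chosen so that $\Gamma_\ast$ preserves cohomological degree).

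Second, I would verify that $\Gamma_\ast$ sends the coniveau filtration to the coniveau filtration. This is a standard property of algebraic correspondences: if $\alpha\in N^j H^i(X\times M,\QQ)$ is supported on a closed subvariety $Z\subset X\times M$ of codimension $\ge j$, then $\Gamma_\ast(\alpha)$ is supported on the image $p_2\bigl(\Gamma\cdot (Z\times Y\times M)\bigr)$, which is closed of codimension $\ge j$ in $Y\times M$ by a dimension count (here it is essential that $\Gamma$ has codimension equal to $\dim (X\times M)$, the dimension of the source). Applying the same argument to the inverse correspondence yields the corresponding statement in the other direction, and hence the claimed isomorphism on each graded piece $N^j H^i$.

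The only step meriting care is this preservation of coniveau by the action of $\Gamma$. It is classical and amounts to the functoriality of cohomology with supports $H^\ast_Z$ under refined intersection and proper pushforward (cf. for instance the treatment in Voisin's book on Hodge theory and complex algebraic geometry). No substantially new obstacle arises beyond Theorem \ref{main} itself, so the corollary is essentially immediate.
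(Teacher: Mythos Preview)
Your argument is correct and follows essentially the same route as the paper: the paper deduces $h(X\times M)\cong h(Y\times M)$ from Theorem~\ref{main} and then invokes the fact that the coniveau filtration depends only on the Chow motive, citing \cite{AK} and \cite{V1} for this functoriality, whereas you spell out the underlying reason (preservation of $N^\ast$ by algebraic correspondences of the appropriate codimension) directly. The content is the same; you are simply unpacking what is contained in those references.
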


\begin{proof} Theorem \ref{main} implies there is an isomorphism of Chow motives $h(X\times M)\cong h(Y\times M)$. As the cohomology and the coniveau filtration only depend on the motive \cite{AK}, \cite{V1}, this proves the corollary.

\end{proof}

\begin{remark} It is worth noting that for any derived equivalent threefolds $X,Y$, there are isomorphisms
  \[ N^j H^i(X,\QQ)\cong N^j H^i(Y,\QQ)\ \ \ \hbox{for\ all\ }i,j\ ;\]
this is proven in \cite{ACV}.
 \end{remark}

\vskip1cm
\begin{nonumberingt}
This note was written during a stay at the Schiltigheim Math Research Institute. Thanks to its director, Mrs. Ishitani, for running the institute with iron hands gloved in velvet.
\end{nonumberingt}

\vskip1cm

\end{document}